\documentclass[12pt]{amsart}
\usepackage{amsmath, amscd,amssymb}
\usepackage{graphicx,psfrag,epsfig}		
\usepackage[all]{xy}

\usepackage{url} 

\topmargin = 0 in
\headheight = 0.4 in
\headsep = 0.2 in
\textheight = 8 in
\textwidth = 5.9 in
\oddsidemargin = 0.0 in

\newtheorem{thm}{Theorem}[section]
\newtheorem{lem}[thm]{Lemma}
\newtheorem{prop}[thm]{Proposition}

\newtheorem{defn}[thm]{Definition}

\newtheorem{cor}[thm]{Corollary}
\newtheorem{rem}[thm]{Remark}

\numberwithin{equation}{section}

\def\N{\mathbb{N}}
\def\C{\mathbb{C}}
\def\R{\mathbb{R}}

\def\K{\mathbb{K}}

\def\dis{\displaystyle}

\def\inf{\mathop{\rm inf}}
\def\det{\mathop{\rm det}}

\def\cS{{\mathcal S}}

\def\dis{\displaystyle}

\def\inf{\mathop{\rm inf}}
\def\det{\mathop{\rm det}}

\newcommand{\be}{\begin{equation}}
\newcommand{\ee}{\end{equation}}

\def \ra {\rightarrow}
\def \raa {\longrightarrow}

\def\dis{\displaystyle}

\def\bdef{\begin{Def}}
\def\endef{\end{Def}}
\def\bthm{\begin{Thm}}
\def\ethm{\end{Thm}} 
\def\blm{\begin{Lm}}
\def\elm{\end{Lm}}
\def\brm{\begin{Rem}}
\def\erm{\end{Rem}}
\def\beq{\begin{eqnarray}}
\def\eneq{\end{eqnarray}}

\def\Bbb{\mathbb}

\def\Bbb{\mathbb}
\def\R{\Bbb R}

\def\C{\Bbb C}


\begin{document}
\title{Uniform radius and regular stratifications}
\author{K. Bekka \\{IRMAR,
Universit\'e de Rennes1, 35042 Rennes (France)}}

\maketitle
\markboth{Uniform radius }{K.Bekka}

\section{Introduction}

In this paper we investigate how germs of real functions can change 
under deformation.
In particular we look at deformations of germs of isolated 
singularities from $\mathbb{R}^n$ to $\mathbb{R}^k$ ($n \geq k$)  and the relation 
with there natural stratification in some tame categorie (algebraic, 
analytic, semi-algebraic,subanalytic, $o$-minimal straucture polynomially bounded). 
The word tame in this paper will refer to one of these categories.

We say that two germs are topologically equivalent 
$f:(\mathbb{R}^n,0)\to (\mathbb{R}^k,0)$  and $g:(\mathbb{R}^n,0)\to 
(\mathbb{R}^k,0)$  are topologically 
equivalents if  there exists a germ of  homeomorphism 
$h:(\mathbb{R}^n,0)\to \mathbb{R}^n,0)$ such that $g\circ h=f$
the topological type of a germ is its right equivalence class.

A family of germs is the germ at $\{0\}\times \mathbb{R}^p$ of some 
 function $F:(\mathbb{R}^n\times \mathbb{R}^p, 
\mathbb{R}^p\times\{0\})\to (\mathbb{R}^k,0).$

We shall usually denote a family of germs by 
$f_{t}:(\mathbb{R}^n,0)\to (\mathbb{R}^k,0)$, $t\in \mathbb{R}^p$,  
where $f_{t}(x)=F(x,t).$

A stratification of a set, e.g. an algebraic variety, is roughly 
 speaking, a partition of it into smooth manifolds so that these manifolds
fit together with respect to some regularity condition. 

We are interested in regularity condition that insure topological 
triviality, which in consequence implies the constancy of the 
topological type.e  

Stratification theory was introduced
by R.Thom and H. Whitney for algebraic and analytic sets.

(see  {\cite{GM}} and {\cite{PW}} for some examples of 
applications of stratification theory). 

We consider in the paper the classical Whitney's conditions:

\begin{defn}
   \label{regular} 
Let $X,Y$ be disjoint manifolds in $\R^m$, and let $y \in Y \cap \overline{X}$. 
A triple $(X,Y,y)$ is called $(a)$ (resp. $(b)$)- regular if 

(a) when a sequence $\{x_n\} \subset X$ tends to $y$
and $T_{x_n} X$ tends in the Grassmanian bundle to 
a subspace $\tau_y$ of $\R^m$, then 
$T_y Y\subset \tau_y$;

(b) when sequences $\{x_n\} \subset X$ and 
$\{y_n\} \subset Y$ each tends to $y$, the unit vector 
$\frac{(y_n-x_n)}{|y_n-x_n|} $ tends to a vector $v$, and 
$T_{x_n}X$ tends to $\tau_y$, then 
$v\in \tau_y$.

$X$ is called $(a)$ (resp. $(b)$)- regular over 
$Y$ if each triple $(X,Y,y)$ is $(a)$(resp. $(b)$)- regular.

\end{defn}

\begin{defn}
A  stratification of a subset $V$  in $\R^m$ 
is a disjoint decomposition 
$V=\bigsqcup_{i \in I} V_i, \quad V_i\cap V_j =\emptyset \ \ \ 
\textup{for} \ \ \ i\neq j  
$
into smooth submanifolds $\{V_i\}_{i \in I}$,
called strata, is called an $(a) $(resp. $(b)$)-regular stratification  
if

1. each point has a neighborhood intersecting only finitely 
many strata;

2. the frontier $\overline{V_j}\setminus V_j$ of each stratum $V_j$ 
is a union of other strata $\bigsqcup_{i \in J(i)} V_i$;

3. any triple $(V_j,V_i,x)$ such that $x \in V_i \subset
\overline{V_j}$ is $(a)$(resp. $(b)$)-regular.

\end{defn}

\begin{thm}
    For any tame set 
$V$ in $\R^m$ there is an $a$(resp. $b$)-regular
stratification.

\end{thm}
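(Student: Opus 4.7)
The plan is to proceed by induction on $\dim V$, combining two ingredients from tame geometry: (i) any tame set admits a finite partition into tame smooth submanifolds satisfying the frontier condition, and (ii) the set of ``bad points'' where a Whitney condition fails between two tame strata is itself tame and of strictly smaller dimension than the target stratum.

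First I would invoke the cell decomposition (for $o$-minimal structures) or the analogous decomposition theorem (for algebraic, semi-algebraic or subanalytic sets) to write $V = \bigsqcup_{i \in I} V_i^0$ as a finite disjoint union of connected tame smooth submanifolds for which the local finiteness condition (1) and the frontier condition (2) of the definition already hold. At this coarse stage no regularity is required.

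Next I would prove the core lemma: for any two disjoint tame strata $X = V_j^0$ and $Y = V_i^0$ with $Y \subset \overline{X}$, the bad set
\[
B(X,Y) = \{\, y \in Y : (X,Y,y) \text{ is not } (a)\text{- (resp.\ } (b)\text{-)regular}\,\}
\]
is tame with $\dim B(X,Y) < \dim Y$. Tameness follows because the Gauss map $x \mapsto T_x X$ takes values in the Grassmannian bundle over $\R^m$, which is a tame variety; its graph has tame closure, and the defining conditions of $(a)$ and $(b)$ become tame conditions on the projection of this closure to $Y$, together with (in case $(b)$) the tame set of limit secant directions $(y_n-x_n)/|y_n-x_n|$. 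The strict dimension drop is the harder point: assuming $\dim B(X,Y) = \dim Y$, the curve selection lemma in the tame category produces a tame arc in $B(X,Y)$ along which an explicit computation of tangent and secant limits yields a contradiction.

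Finally I would iterate. Given the decomposition $\{V_i^0\}$ and the bad sets $B_{j,i}$, set $\Sigma = \bigcup_{i,j} \overline{B_{j,i}}$; it is tame and satisfies $\dim \Sigma < \dim V$. I would replace each $V_i^0$ by the connected components of $V_i^0 \setminus \Sigma$, apply the induction hypothesis to $\Sigma$ to get a regular stratification of it, and then refine once more (by tame partition) to restore compatibility with the frontier condition across the two layers. Since the top dimension of the unstratified part strictly decreases at every step, the procedure terminates and yields an $(a)$- (resp.\ $(b)$-)regular stratification of $V$.

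The main obstacle is the core lemma on bad sets, in particular the strict dimension inequality $\dim B(X,Y) < \dim Y$. The tameness of $B(X,Y)$ requires careful bookkeeping with closures and projections in the Grassmannian bundle, and the dimension drop for $(b)$-regularity is substantially more delicate than for $(a)$-regularity because it mixes simultaneous limits in $X$ and $Y$ and couples tangential with secant information; both rely crucially on the curve selection lemma available in every tame category.
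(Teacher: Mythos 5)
The paper does not actually prove this statement: it is quoted as a classical existence theorem, with the proof delegated to the references (Whitney, Thom, \L ojasiewicz, Hironaka, Bochnak--Coste--Roy, van den Dries--Miller). So there is no in-paper argument to compare with; what you propose is the standard scheme found in those references --- decompose $V$ into finitely many tame smooth submanifolds, show that the locus where a Whitney condition fails between two tame strata is tame of strictly smaller dimension, and induct on dimension. As a plan this is the right one, and it is essentially the route the cited sources take.

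The problem is that your ``core lemma'' $\dim B(X,Y)<\dim Y$ \emph{is} the theorem, and you do not prove it: saying that a full-dimensional bad set would give, via curve selection, a tame arc along which ``an explicit computation of tangent and secant limits yields a contradiction'' is a placeholder, not an argument. For condition $(a)$ this kind of argument can be carried out, but for condition $(b)$ the dimension drop is genuinely delicate: Whitney's proof in the analytic case and Loi's proof in the general $o$-minimal case rest on substantially more than a single curve-selection computation (wing-type lemmas, generic projections, or Verdier-type estimates in the polynomially bounded case), and there is no routine way to extract the contradiction you assert. Until that lemma is established, the induction has nothing to run on. Two smaller points: a cell decomposition does not by itself satisfy the frontier condition (2), so your coarse decomposition already needs a refinement step; and in the iteration, once you remove $\Sigma$ and stratify it separately, new pairs of strata appear between the two layers, so the bad-set lemma must be re-applied and the refinement repeated --- your ``refine once more'' hides this, although organizing the induction by descending dimension does make it work. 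These last issues are repairable bookkeeping; the unproved dimension-drop lemma, especially for $(b)$, is the real gap.
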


For the proof of this theorem i.e. the existence of stratifications, 
in various tame categories, see
{\cite{Wh}}, {\cite{Th}}, {\cite{Lo}},{\cite{Hi}}, {\cite{BCR}, 
{\cite {DM}}.

For a family of germs of isolated singularities 
$F:(\mathbb{R}^n\times \mathbb{R}^p, 
\{0\}\times \mathbb{R}^p)\to (\mathbb{R},0)$, we associate the 
canonical stratification 
 of $\mathbb{R}^n\times \mathbb{R}^p$ given by the partition \\
 $$\{\mathbb{R}^n\times \mathbb{R}^p-F^{-1}(0), F^{-1}(0)-\{0\}\times \mathbb{R}^p, 
 \{0\}\times \mathbb{R}^p\}$$

We shall denote by $\pi$ the projection on the second factor, 
$V=F^{-1}(0)$, $Y=\{0\}\times \mathbb{R}^p$ , $X=V-Y$ and 
$X_{t}=\{x\in \R^n| F(x,t)=0\}.$

Since   $X_{t}$ has an isolated singularity at $(0,t)$ i.e. the critical set of the restriction of $\pi$ to $V$ is $Y.$

Then $X$ is a smooth manifold , and for each point $(x,t)\in X,$ we 
have\\
$\dis T_{(x,t)}X=\{(u,v)\in \C^n\times\C|
\sum_{i=1}^{n}u_{i}\frac{\partial F}{\partial x_{i}}(x,t) + 
\sum_{j=1}^{p}v_{j}\frac{\partial F}{\partial 
t_{j}}(x,t)=0\}=\left(\mathbb{R}d F\right)^{\perp}.$

we  use the following notation $d F=(\frac{\partial F}{\partial x_{1}},\ldots, \frac{\partial F}{\partial x_{n}},\frac{\partial F}{\partial t})$, 
$d_{x} F=(\frac{\partial F}{\partial x_{1}},\ldots, \frac{\partial F}{\partial x_{n}})$

and $\| d_{x} F\|^2=  \sum_{i=1}^{n}\|\frac{\partial F}{\partial x_{i}} \|^2.$

For the canonical stratification associated to  a family of germs of 
isolated singularities, to be (a) ( resp. (b)) regular,  can be made
more practical by the following form:

\begin{defn}
  We say that $F$ is Whitney regular at $0 $ if its canonical 
  stratification is Whitney regular and this is equivalent to the following conditions are satisfied:
 
\underline {  condition $(a)$ :}

$$\lim_{(x,t)\to 0\atop (x,t)\in X}\left ( \frac{\frac{\partial 
F}{\partial t_{j}}(x,t)}{\| d_{x} F(x,t)\|}\right)=0 \qquad \text{for 
each } 1\leq j\leq p.$$

\underline { condition $(b^\prime)$: }

$$\lim_{(x,t)\to 0\atop (x,t)\in X}\left (\frac{\sum_{i=1}^{n}x_{i}\frac{\partial F}{\partial x_{i}}(x,t)}{\|x\|\| d_{x} F(x,t)\|}\right)=0.$$

\end{defn}

\begin{rem}
    \begin{enumerate}
        \item It is known that condition $a + b^\prime$ is equivalent to 
    condition $b$. (see \cite{Th}, \cite{Ma})
    
        \item A Whitney regular family of function germs is 
	topologically trivial, then the topological type is constant 
	in such family. (see \cite{Th}, \cite{Ma})

    \end{enumerate}
    
\end{rem}

\section{Uniform radius and vanishing folds}

A family of germs is said to have no coalessing of critical points  (in the sens of 
H.King \cite{K}) if there exists a neighbourhood $U$ of 
$\{0\}\times\mathbb{R}^p$ in $\mathbb{R}^n\times \mathbb{R}^p$, such 
that the restriction of $f_{t}$ to $U\cap 
(\mathbb{R}^n-\{0\}\times\{t\} )$ has no critical point (i.e. submersion) for 
each $t\in \mathbb{R}^p.$

For example the family $f_{t}(x)=x^3-3tx^2$ has a coalesing, and 
$f_{0}$ is topologically equivalent to the identity, but for
$t\not=0$, $f_{t}$ has a maximum or  a minimum at $0$ and then fails 
to be topologically equivalent to the identity.

For a germ of isolated singularity $f:(\mathbb{R}^n,0)\to (\mathbb{R},0)$ we denote by  
$\mu(f_t)=\dim\frac{\mathcal{O}_n}{(\frac{\partial f}{\partial 
x_1},\ldots, \frac{\partial f}{\partial x_n})}$ its the Milnor number.

We say that a family of isolated singularities is $\mu$-constant 
family if $\mu(f_{t})=\mu(f_{0})$ for each $t\in \mathbb{R}^p.$

\begin{rem}
      For complex analytic  germs no coalescing of critical 
      points is equivalent to  the family is $\mu$-constant.
\end{rem}

\begin{defn}
   Let $f:(\mathbb{R}^n,0)\to (\mathbb{R},0)$ be a tame germ with isolated 
singularity. Let $\rho:(\mathbb{R}^n,0)\to \mathbb{R}^+$ be a germ of 
a  tame submersion such that $\rho^{-1}(0)=\{0\}$.

The $\rho$-Milnor radius of $f$,$\rho(f)$, is the smallest critical value of 
the restriction of $\rho$ to the smooth variety $f^{-1}(0)-\{0\}$  
i.e.\\  $\rho(f)= \inf \{\rho(x)| x\in  f^{-1}(0)-\{0\}  \text{ and } 
d_{x} f=\lambda d_{x} \rho, \text{ for some } \lambda\in 
\mathbb{R}\}$.

If there are no critical values then $\rho(f)=\infty$.

\end{defn}

To extend this notion to tame maps $f:(\mathbb{R}^n,0)\to 
(\mathbb{R}^k,0)$  we use the following notation.

\begin{defn}
   Let $f:(\mathbb{R}^n,0)\to (\mathbb{R}^k,0)$ be a tame germ with isolated 
singularity. Let $\rho:(\mathbb{R}^n,0)\to \mathbb{R}^+$ be a germ of 
a  tame submersion such that $\rho^{-1}(0)=\{0\}$.

The $\rho$-Milnor radius of $f$, $\rho(f)$, is the smallest critical value of 
the restriction of $\rho$ to the smooth variety $f^{-1}(0)-\{0\}$  
i.e.\\ $\dis \rho(f)= \inf \{\rho(x)| x\in  f^{-1}(0)-\{0\}  \text{ 
and } \sum_{1\leq i_1<\ldots<i_{m+1}\leq 
n} \left|\frac{D(f_1,\ldots, 
f_k,\rho)}{D(x_{i_1},\ldots,x_{i_{m+1}})}(x)\right|^2 =0\}.$

If there are no critical values then $\rho(f)=\infty$.

\end{defn}

\begin{defn}[uniform Milnor radius]
    
    Let $\{f_{t}\}$, $t\in \mathbb{R}^p$ be a  family of germs at 
    $\{0\}$ of isolated singularity.

    We say that $\{f_{t}\}$ has uniform $\rho$-Milnor radius if there 
    is an $\epsilon>0$ such that $\rho(f_{t})> \epsilon$ for all $t\in 
    \mathbb{R}^p.$
   
\end{defn}

We call such function $\rho$ a control function.
\vskip 1cm

We say that a point $p \in f^{-1}(0) $ is a $\rho$-Kink ( or simply a { Kink})  of $ f^{-1}(0) $
if $p$  is non singular point of  $f$  and if $p$ is a critical point of $\rho$ restricted to the manifold of smooth points of $ f^{-1}(0).$
(see \cite{O})

\vskip 0.5cm
\begin{rem} For $k=1$, 

an easy computation shows that a nonsingular $p\in f^{-1}(0)$  is a 
kink if and only if $df(p)=\lambda d_x\rho(p)$ for some $\lambda$ in $\mathbb{R}-\{0\} .$
\end{rem}

\vskip 0.5cm

We suppose that for every $t\in\mathbb{R}^p$, $f_t(0)=0$ and $0$ is an isolated critical point of $f_t.$

Let $\gamma:[0,\epsilon] \to\R^n\times[0,1]$ be a real analytic path $\gamma(s)=(x(s),t(s))$  such that:
\begin{itemize}
\item[1)] $\gamma(0)=(0,0)$
\item[2)] $\vert x(s)\vert>0$ and $\vert t(s)\vert>0$ for all $0<s<\epsilon,$ and
\item[3)] $f(x(s),t(s))=0$ for all $0\leq s\leq \epsilon.$
\end{itemize}

\vskip 5pt

\begin{defn}

The path $\gamma$ will be called a { $\rho$-vanishing fold}  of $f$  
(centered at $0$ ) if  $x(s)$ is a $\rho$-kink of $f_{t(s)}^{-1}(0)$
 for every  $s\in (0,\epsilon].$
\end{defn}

\begin{prop}
 $\{f_{t}\} $ has a $\rho$-uniform radius if and only if it has no 
 $\rho$-vanishing fold in 
 $U(\epsilon_0)=\{x\in\K^n: \rho(x)<\epsilon_0\} $  for some $\epsilon_0 >0.$
\end{prop}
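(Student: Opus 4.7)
The plan is to combine the definition of $\rho(f_t)$ as an infimum of $\rho$ over $\rho$-kinks with the curve selection lemma, valid in every tame category considered in the paper.

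\textbf{Forward direction.} Let $\epsilon>0$ realize the uniform radius, so that $\rho(f_t)>\epsilon$ for all $t$, and set $\epsilon_0=\epsilon$. If a $\rho$-vanishing fold $\gamma(s)=(x(s),t(s))$ lay in $U(\epsilon_0)$, then for every $s\in(0,\epsilon]$ the nonzero point $x(s)$ is a $\rho$-kink of $f_{t(s)}^{-1}(0)$ with $\rho(x(s))<\epsilon_0$; by the infimum definition of $\rho(f_{t(s)})$ this would force $\rho(f_{t(s)})\le\rho(x(s))<\epsilon$, contradicting uniform radius.

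\textbf{Backward direction (contrapositive).} Suppose uniform radius fails; we shall produce a $\rho$-vanishing fold inside every $U(\epsilon_0)$. The key object is the set
\[
K=\{(x,t)\in(\mathbb{R}^n\setminus\{0\})\times\mathbb{R}^p : F(x,t)=0 \text{ and } x \text{ is a } \rho\text{-kink of } f_t^{-1}(0)\},
\]
which is tame because the kink condition is given by the vanishing of the Jacobian determinants used to define $\rho(f)$ and $F$ itself is tame. Failure of uniform radius produces a sequence $(x_n,t_n)\in K$ with $\rho(x_n)\to 0$; tameness of $\rho$ together with $\rho^{-1}(0)=\{0\}$ forces $x_n\to 0$, and the germ framework at $\{0\}\times\mathbb{R}^p$ (in which the relevant parameters cluster near the origin) lets us assume, after extracting a subsequence, that $t_n\to 0$. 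Hence $(0,0)\in\overline{K}$, and the curve selection lemma yields a real analytic arc $\gamma:[0,\epsilon]\to\mathbb{R}^n\times\mathbb{R}^p$ with $\gamma(0)=(0,0)$ and $\gamma((0,\epsilon])\subset K$. Conditions (1) and (3) of the vanishing fold definition, together with $|x(s)|>0$, are immediate.

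It remains to check $|t(s)|>0$ for small $s>0$, and by analyticity this reduces to ruling out $t(s)\equiv 0$. If $t(s)\equiv 0$, each $x(s)$ is a $\rho$-kink of $f_0^{-1}(0)$, so $d\rho(x(s))$ lies in the span of the covectors $df_{0,i}(x(s))$; pairing with $x'(s)$ and using $f_0(x(s))\equiv 0$ gives $\frac{d}{ds}\rho(x(s))\equiv 0$, so $\rho(x(s))\equiv\rho(0)=0$, and $\rho^{-1}(0)=\{0\}$ then forces $x(s)\equiv 0$, contradicting $x(s)\neq 0$. Once $|t(s)|>0$ is known, the convergence $\rho(x(s))\to 0$ allows us to restrict $\gamma$ to a subinterval on which $x(s)\in U(\epsilon_0)$ for any prescribed $\epsilon_0>0$, yielding the required vanishing fold. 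This rigidity argument ruling out $t(s)\equiv 0$ is the technical heart of the proof; the rest consists of recognizing $K$ as a tame set and invoking curve selection.
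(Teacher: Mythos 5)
The paper states this proposition and leaves it unproved (no argument follows it in the text), so there is nothing to compare your proof against; judged on its own, your argument is correct and is surely the intended one: the forward direction is immediate from the definition of $\rho(f_t)$ as an infimum over kink values, and the backward direction is curve selection applied to the definable set $K$ of kink pairs, together with the rigidity computation showing that an arc of kinks with $t(s)\equiv 0$ would force $\rho(x(s))\equiv 0$ and hence $x(s)\equiv 0$. Two small points deserve to be made explicit. First, your reduction to $t_n\to 0$ (``the germ framework lets us assume, after extracting a subsequence, that $t_n\to 0$'') is where the statement must really be read locally: if the uniform bound failed only at parameters bounded away from $0$, the resulting fold would be centered at some other point $(0,t_0)$ of the axis $Y=\{0\}\times\R^p$, not at the origin; so either localize the uniform-radius condition near $t=0$ or allow vanishing folds centered at arbitrary points of $Y$ --- this is consistent with the paper's germ-along-the-axis conventions, but say it. Second, the points produced by failure of the uniform radius are a priori zeros of the minor system defining $\rho(f_t)$, and they are kinks only because, in the paper's setting, the chosen representative has no off-axis critical points of $f_t$ on $V=F^{-1}(0)$ (the critical set of $\pi|_V$ is $Y$); likewise, in a general $o$-minimal structure the curve selection lemma yields a definable $C^1$ arc rather than a real analytic one, so ``real analytic path'' in the definition of a vanishing fold should be read accordingly outside the analytic and semialgebraic/subanalytic cases. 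Note also that your rigidity argument proves, as a byproduct, that $\rho(f_t)>0$ for each fixed $t$, which is what makes the non-uniformity genuinely a phenomenon of the family.
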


\begin{rem}
    In the analytic complex case we have the following theorem 
    which relies the jump of Milnor number with the existence of 
    vanishing folds.

  We obtain a generalisation of \cite{O}: 
    
\begin{thm} $\K=\C.$ 

Let $F:(\mathbb{C}^n\times \mathbb{C}, 
\{0\}\times \mathbb{C})\to (\mathbb{C},0)$ a family of germs of 
isolated singularities  and  $X_{t}=\{f_t^{-1}(0)\}$ the 
corresponding family of hypersurfaces.
Let $\mu_t\equiv \mu $ be the Milnor number of $f_t$ at the origine 
and suppose that $\mu_t=\mu$ is constant for $0<t\leq 1$ and $\mu<\mu_0$ .

Then, the family $\{f_t^{-1}(0)\}$ admits a vanishing fold centered at $0$.
\end{thm}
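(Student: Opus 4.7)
The plan is to prove the contrapositive: if there is no $\rho$-vanishing fold centered at $0$, then $\mu(f_t)$ is constant in $t$, contradicting $\mu_0 > \mu$. The starting point is the preceding Proposition, which directly converts the absence of vanishing folds in $U(\epsilon_0)$ into uniform $\rho$-Milnor radius: after shrinking $\epsilon_0$, we may assume $\rho(f_t) > \epsilon_0$ for every $t \in [0,1]$. In geometric terms, for every $t$ the punctured hypersurface $f_t^{-1}(0) \setminus \{0\}$ meets each level $\{\rho = c\}$ transversally for $0 < c \leq \epsilon_0$.

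The next step is to upgrade this transversality on the zero fibre to a uniform Milnor fibration: find $\eta > 0$, independent of $t$, such that
\[
f_t \colon U(\epsilon_0) \cap f_t^{-1}(D_\eta^*) \longrightarrow D_\eta^*, \qquad D_\eta^* = \{w \in \C : 0 < |w| < \eta\},
\]
is a smooth locally trivial fibration for every $t \in [0,1]$. Transversality of $f_t^{-1}(w)$ to $\{\rho = \epsilon_0\}$ is open in $w$, so a uniform $\eta$ follows from compactness of $[0,1]$, provided no critical value of $f_t$ other than $0$ accumulates at $0$ as $t \to 0$. This is where the assumption is really used: if a family of critical points $x(t) \neq 0$ of $f_t$ with $x(t) \to 0$ and $f_t(x(t)) \to 0$ existed, the curve selection lemma applied to the closure of the $\rho$-kink locus on the zero fibre would yield a real analytic arc of $\rho$-kinks tending to $(0,0)$ — a $\rho$-vanishing fold, contradicting the standing hypothesis.

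Given the uniform Milnor fibration, the total space
\[
\cM = \{(x,t) \in U(\epsilon_0) \times [0,1] : 0 < |F(x,t)| < \eta\}
\]
is a smooth manifold and the map $(F, \mathrm{pr}_2) \colon \cM \to D_\eta^* \times [0,1]$ is a proper submersion onto its image. Thom's first isotopy lemma then yields a trivialisation over $[0,1]$, so the Milnor fibres $F_t = f_t^{-1}(w_0) \cap U(\epsilon_0)$ (for a fixed $w_0 \in D_\eta^*$) are all homeomorphic. In the complex analytic setting the Milnor number equals $\dim H_{n-1}(F_t;\Q)$, a topological invariant of the fibre, so $\mu(f_t)$ is constant on $[0,1]$, contradicting $\mu_0 > \mu$.

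The main obstacle is the second step: transferring the existence of coalescing critical points of $f_t$ into an actual $\rho$-vanishing fold on $f_t^{-1}(0)$. An extra critical point $x(t)$ of $f_t$ with critical value $c(t) \neq 0$ is not itself a $\rho$-kink (the condition $df_t = \lambda\, d\rho$ demands $\lambda \neq 0$ at a \emph{nonsingular} point of $f_t$), and in general does not even lie on $f_t^{-1}(0)$. One must argue that the local topology of the zero fibre around such an $x(t)$ — a small vanishing cycle carrying a $\rho$-extremum — produces a semianalytic family of $\rho$-kinks on $f_t^{-1}(0)$ whose closure reaches $(0,0)$, after which curve selection supplies the desired analytic arc. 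This is the step in which the complex analytic hypothesis (upper semicontinuity of $\mu$, existence of vanishing cycles after a generic smoothing) is essential.
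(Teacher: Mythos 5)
There is a genuine gap, and it sits exactly where you flag ``the main obstacle'': the passage from a Milnor-number jump (equivalently, by conservation of multiplicity, a family of critical points $x(t)\neq 0$ of $f_t$ with $x(t)\to 0$ and critical values $f_t(x(t))\to 0$, necessarily nonzero since the zero fibres are smooth off $Y$) to the existence of $\rho$-kinks \emph{on the zero fibre} accumulating at the origin. Your contrapositive needs precisely the implication ``no vanishing fold $\Rightarrow$ no coalescing critical values near $0$'' in order to get a uniform punctured disc $D_\eta^*$ and the uniform fibration; but that implication is equivalent to the theorem itself, and your justification of it (``curve selection applied to the closure of the $\rho$-kink locus on the zero fibre'') presupposes that this kink locus is nonempty arbitrarily close to $(0,0)$ --- which is the statement to be proved. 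Curve selection only converts an already-established accumulation of kinks into an analytic arc; it does not produce the kinks. As you yourself note, an extra critical point of $f_t$ is neither a kink nor even a point of $f_t^{-1}(0)$, so some geometric mechanism must create critical points of $\rho$ restricted to the zero fibre, and your sketch (``a small vanishing cycle carrying a $\rho$-extremum'') is not an argument. Everything else in your outline (uniform radius from the Proposition, isotopy over $[0,1]$, topological invariance of $\mu$) is routine once this step is granted, so the proposal defers the entire content of the theorem.

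For comparison, the paper supplies exactly the missing mechanism, and it runs through the \emph{nearby} fibres rather than directly through the zero fibre. Fix $R$ with the usual transversality and nonsingularity properties, and small $t>0$. Over the complement of the discriminant $\Delta=H(\Sigma)\cap(D_\delta\times[0,\tau])$ of $H(x,t)=(f(x,t),t)$, an Ehresmann-type fibration shows $f_t^{-1}(z)\cap B_R$ has the homotopy type of a bouquet of $\mu_0$ spheres (it is identified with the Milnor fibre of $f_0$), while Milnor theory at the smaller radius $R(t)$ gives that $f_t^{-1}(z)\cap B_{R(t)}$ is a bouquet of only $\mu<\mu_0$ spheres. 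Morse theory applied to $\rho$ restricted to $f_t^{-1}(z)$ then forces a critical point of $\rho|_{f_t^{-1}(z)}$ in the annulus $B_R\setminus B_{R(t)}$ for every small $z\neq 0$; letting $z\to 0$ and using that the locus of such critical points (together with critical points of $f_t$) is a closed real analytic set, one obtains a critical point of $\rho|_{f_t^{-1}(0)}$ in the annulus, which by the nonsingularity of $f_t$ there is a kink of $f_t^{-1}(0)$. Repeating with $R\to 0$ shows the analytic set of kinks accumulates at $(0,0)$, and only then does the curve selection lemma yield the vanishing fold. If you want to salvage your contrapositive formulation you would still have to reproduce this two-radius comparison and the limit $z\to 0$; without it the proof is circular at its key step.
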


\end{rem}

\begin{rem}
It is not difficult, to see that any family of isolated singularity 
of quasihomogeneous  functions can not have a vanishing fold with 
$\rho=\sum_{i=1}^n |x_i|^2.$ 
i.e. has $\rho$-Milnor uniforme radius.

In fact,  if $\{f_t\}$ is quasihomogeneous  family of type
$(w_1,\ldots,w_n; D)$, the "Euler formula" gives:

$$\sum_{i=1}^n w_i x_i \frac{\partial f_t}{\partial x_i}=D. f_t.$$

And now if $f$ has a kink at $p=(x,t)$  then there exists $\lambda\in 
\R^*$ such that

$d_xf(p)=\lambda {d_x\rho(p)}$ i.e. $\frac{\partial f_t}{\partial x_i}=\lambda {x_i}$

Since $p\in f^{-1 }(0)$ the Euler formula gives
$$\lambda\sum_{i=1}^n w_i \vert x_i\vert^2 =0$$ which implies 
$p\in\{0\}\times \mathbb{R}^p$
then it's not in the smooth part.

We can also show that it has uniform $\rho$-milnor radius, for  
$\rho$ a quasihomogeneous control function with respect to the system 
of weights,  for example: $\sum_{i=1}^n  \vert x_i\vert ^{2\frac{D}{w_i}}.$

\end{rem}

\begin{rem}
Having  a uniform $\rho$-Milnor radius for some ``control function'' 
$\rho$ doesn't  for a family of germs is  in general weaker than whitney regular.

For example,  take the Brian\c{c}on-speder family  $F(x,y,z,t)= 
z^5+ty^6z+y^7x+x^{15}$, its a family
quasihomogeneous polynomials of type $(1,2,3; 15)$ 

The Milnor number is  given   for an isolated  singularity
quasihomogeneous  $f$ of type
$(w_1,\ldots,w_n; D)$ by the formula:

$$\mu(f)=\frac{(D-w_1)(D-w_2)\ldots(D-w_n)}{w_1.w_2.\ldots w_n}$$

which gives in our case $\mu(f_t)=364.$

The family $f_t$ is $\mu$-constant but for a generic hyperplan $H$ in 
$\mathbb{R}^3$, its equation can be written
 $z=ax+by$ with $a,b\in\R-\{0\}$, and so the restriction family 
 $g_t=f_t\vert _{H} $ is the family of polynomials
 $g_t(x,y)=x^5+txy^6+y^7(ax+by)+(ax+by)^{15}$

 Then, for $t\not=0,$  $g_t$ is semiquasihomogeneous with leading 
 term $x^5+txy^6$ is of type $(3,2:15).$
 Using the fact that $\mu(g_t)=\mu(x^5+txy^6)$ we obtains
 $$\mu(H\cap Y_t)=26.$$
 But for $t=0$, $g_0(x,y)=x^5+y^7(ax+by)+(ax+by)^{15}=x^5+by^8 + (axy^7+(ax+by)^{15})$
 is semiquasihomogeneous with leading term $x^5+by^8$ is of type $(8,5:40)$
then:  $$\mu(H\cap Y_0)=28.$$

Since the Milnor number jumps, this family has $\rho$-vanishing fold.

In fact the family of curves we obtain by the intersection with a generic hyperplan must have a vanishing folds.

In the complex, Brian\c{c}on and Speder show that this family is not 
Whitney regular ( using the fact that whitney regular family must have 
constant milnor numbers after intersection by generic hyperplan ).
\end{rem}

\section{Vanishing folds and Whitney condition}

Let $F$ an analytic function from $\R^n\times \R$ to $\R$, in an neighbourhood of $0$

$$\begin{matrix} F: & \R^n\times \R^p,0 &\to& \R,0 \\ & (x,t)&\mapsto & F(x,t)  \end{matrix}$$
$F(0,t)=0$

We denote by $\pi$ the projection on the second factor, 
$V=F^{-1}(0)$, $Y=\{0\}\times \R$ and $X_{t}=\{x\in \R^n/ F(x,t)=0\}.$

We suppose  $X_{t}$ has an isolated singularity at $(0,t)$ i.e. the critical set of the restriction of $\pi$ to $V$ is $Y.$

Then $X=V-Y$ is an analytic complex manifold of dimension $n$, and for each point $(x,t)\in X$ we have

$\dis T_{(x,t)}X=\{(u,v)\in \R^n\times\R^p/ 
\sum_{i=1}^{n}u_{i}\frac{\partial F}{\partial x_{i}}(x,t) 
+\sum_{j=1}^{p}u_{i}v_{j}\frac{\partial F}{\partial t}(x,t)=0\}=\left(\R {d} F\right)^{\perp}.$
 
 where $d F=(\frac{\partial F}{\partial x_{1}},\ldots, \frac{\partial 
 F}{\partial x_{n}},\frac{\partial F}{\partial t})$, $d_{x} F=(\frac{\partial F}{\partial x_{1}},\ldots, \frac{\partial F}{\partial x_{n}}).$

Let $\mathcal{G} $  be the  set of analytic applications germs from 
$\R^n\times\R^p,0$ to $\R^n\times\R^p,0$  of the following type :

$\Phi(y,\tau)=(\Psi(y,\tau),\lambda(\tau))=(x,t)$,

where $\Psi$ for small $\tau$ is a germ of automorphisms of $(\R^n,0)$
(i.e. $\det \left( \frac{\partial \Psi}{\partial y}\right)\not=0$ and $\Psi(0,\tau)=0$ ).

We suppose given  $F: \R^n\times\R^p, 0\to \R,0$ is an analytic 
deformation of $f=f_{0}$ such that
$F(0,t)=\frac{\partial F}{\partial x_1}=\ldots=\frac{\partial F}{\partial x_n}=0$, $X=F^{-1}(0)$,
$X_t=f_t^{-1}(0)$  and $Y=\{0\}\times\R$.

The following theorem says that Whitney regularity is equivalent to 
the stability of the uniform  $\rho$-Milnor property with respect to 
families of  linear change of variable in $x$.

\begin{thm}\label{mainthm}
Let $F$ be a $\mu$-constant deformation. The following conditions are equivalent
\begin{itemize}
\item[(i)] $F$ is Whitney regular
\item[(ii)] $\forall \Phi\in \mathcal{G}$, $F\circ \Phi$ has no vanishing fold.
\end{itemize}
\end{thm}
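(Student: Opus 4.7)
The plan is to prove the two directions separately, using that kinks correspond (via the curve selection lemma in the tame setting) to arcs along which the Whitney quotients fail to vanish, and exploiting that one can manufacture such kinks by a well-chosen linear change of the $x$-coordinates depending on $t$.

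For $(i)\Rightarrow(ii)$, I would first observe that each $\Phi(y,\tau)=(\Psi(y,\tau),\lambda(\tau))\in\mathcal{G}$ is a family of source diffeomorphisms together with a diffeomorphism of the parameter space, so the canonical stratification of $F\circ\Phi$ is the $\Phi$-preimage of the canonical stratification of $F$ and the Whitney conditions $(a)$ and $(b^\prime)$ transform covariantly. Hence Whitney regularity of $F$ implies Whitney regularity of $F\circ\Phi$ for every $\Phi\in\mathcal{G}$. Next, I would show directly that a Whitney regular family admits no $\rho$-vanishing fold for $\rho=\|x\|^{2}$: if $\gamma(s)=(x(s),t(s))$ were such a fold, the kink identity $d_x F(\gamma(s))=2\lambda(s)\,x(s)$ yields
$$\frac{\sum_i x_i(s)\,\frac{\partial F}{\partial x_i}(\gamma(s))}{\|x(s)\|\,\|d_x F(\gamma(s))\|}=\pm 1,$$
contradicting condition $(b^\prime)$ in the limit $s\to 0$.

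For $(ii)\Rightarrow(i)$, I would argue the contrapositive. Assume $F$ is not Whitney regular, so either $(a)$ or $(b^\prime)$ fails. The tame curve selection lemma produces an analytic arc $\gamma(s)=(x(s),t(s))\in X$ with $\gamma(s)\to 0$ along which the offending quotient has a nonzero limit. The idea is to construct $\Phi(y,\tau)=(A(\tau)y,\lambda(\tau))\in\mathcal{G}$, linear in $y$, so that the transformed arc $(y(s),t(s))=(A(t(s))^{-1}x(s),t(s))$ is a genuine $\rho$-vanishing fold of $F\circ\Phi$. Writing $w(s):=(d_x F)^{T}(x(s),t(s))$ and using the chain rule, the kink equation $d_y(F\circ\Phi)(y(s),t(s))\parallel y(s)$ reduces to
$$A(t(s))\,A(t(s))^{T}\,w(s)\ \parallel\ x(s).$$
The failure of $(b^\prime)$ along $\gamma$ means that $w(s)$ is not asymptotically orthogonal to $x(s)$, so this linear-algebraic problem is non-degenerate and can be solved by an analytic family of symmetric positive-definite matrices along $\gamma$. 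The case where only $(a)$ fails is reduced to the previous one by invoking the equivalence $a+b^\prime\Leftrightarrow b$ from the earlier remark: the failing arc for $(b)$ must violate $(b^\prime)$ after a suitable nontrivial reparametrization $\lambda(\tau)$ that absorbs the $\partial F/\partial t_j$ defect into the $x$-gradient.

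The principal obstacle is the global construction of $\Phi$: the matrix $A(t(s))$ is only prescribed along the one-parameter locus $\{t(s):s\geq 0\}$, and one must analytically extend it to an invertible family $A(\tau)$ in a full neighborhood of $0\in\R^p$, preserving the symmetry/positivity data that guarantees the kink equation. This is where the tame category and the non-degeneracy supplied by the failure of Whitney regularity are essential. Once $\Phi$ is in hand, checking that $\Phi^{-1}\circ\gamma$ is a real-analytic path satisfying the three defining properties of a $\rho$-vanishing fold of $F\circ\Phi$ is immediate, since $\Phi$ is a diffeomorphism preserving $\{F=0\}$ and the $\mu$-constant hypothesis keeps $(y(s),t(s))$ in the smooth part of $(F\circ\Phi)^{-1}(0)$ for $s>0$.
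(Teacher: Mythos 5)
Your skeleton coincides with the paper's (Whitney invariance under $\mathcal{G}$ plus ``Whitney $\Rightarrow$ no fold'' for one direction; curve selection plus a linear-in-$y$ change of coordinates turning the bad arc into a kink locus for the other), but the decisive step of (ii)$\Rightarrow$(i) is missing. Your kink equation $A A^{T}w \parallel x$ is the right one, yet you stop exactly where the work lies: producing an actual element of $\mathcal{G}$. Because you insist on $A$ being a function of the original parameter $t$ (your transformed arc is $(A(t(s))^{-1}x(s),t(s))$), you create the extension problem you acknowledge and do not solve --- and as framed it can genuinely fail: the natural solution along the arc is analytic in the arc parameter $s$, while $t(s)$ may vanish to order $\ge 2$, so $A(t)$ would involve fractional powers of $t$; nothing in ``the tame category'' rescues this, and nothing in your sketch controls boundedness and invertibility of $A(\tau)$ as $\tau\to 0$. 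The paper avoids both difficulties by exploiting that $\mathcal{G}$ does \emph{not} require $\lambda$ to be invertible: it takes the new parameter to be the curve parameter itself, puts $\lambda(\tau)=$ the $t$-component of $\gamma$, and writes an explicit shear $\Psi(y,\tau)$, linear in $y$, whose coefficients are the ratios $p_j/p_1$ and $p_jq_i/u$ built from $x\circ\gamma$, $d_xF\circ\gamma$ and $u=\langle x, d_xF\rangle\circ\gamma$. The analyticity of these coefficients at $\tau=0$ is precisely what the nonzero limit of the $b^\prime$-quotient buys, via the valuation identity $v(u)=v(x\circ\gamma)+v(d_xF\circ\gamma)$ (no cancellation), and invertibility is secured by computing $\det(\partial\Psi/\partial y)=1$; the gradient computation then exhibits the kink along $(p_1(\tau),0,\dots,0,\tau)$. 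Without this construction (or an equivalent), your proof of (ii)$\Rightarrow$(i) is incomplete.

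Two further gaps. First, your handling of the case where condition $(a)$ fails --- ``absorbing the $\partial F/\partial t_j$ defect into the $x$-gradient by a suitable reparametrization'' --- is not an argument; the paper uses the $\mu$-constant hypothesis at exactly this point (by L\^e--Saito, $\mu$-constancy forces $(a)$), so that only $(b^\prime)$ can fail and the curve-selection arc can be taken to violate $(b^\prime)$; you never invoke $\mu$-constancy for this, so that case is unproved in your scheme. Second, in (i)$\Rightarrow$(ii) your claim that the Whitney conditions ``transform covariantly'' under $\Phi$ tacitly assumes $\lambda$ is a diffeomorphism of the parameter space, which the definition of $\mathcal{G}$ does not require; the paper treats the two factors separately, using diffeomorphism-invariance only for $(y,\tau)\mapsto(\Psi(y,\tau),\tau)$ and checking by a chain-rule estimate that condition $(a)$ survives $(y,\tau)\mapsto(y,\lambda(\tau))$ while $(b^\prime)$ is unaffected. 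Your argument that a Whitney regular family has no $\rho$-vanishing fold (the $b^\prime$-quotient equals $\pm1$ at a kink) is correct and is the intended justification of the corresponding assertion in the paper.
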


\pf
(i) $\Rightarrow$   (ii)

We have seen that, if a deformation is Whitney regular then it's has no vanishing folds.
We have then only to show that if $F$ is Whitney then so is $F\circ\Phi$ for all $\Phi\in\mathcal{G}.$

By definition $F\circ \Phi(y,\tau)=F(\Psi(y,\tau),\lambda(\tau))$; this suggest to do it in the two following steps:

Firstly, for $\lambda=Id_{\mathbb{R}^p}$, $\Phi_1(y,\tau)=(\Psi(y,\tau),\tau)$, is 
then an analytic diffeomorphism of $\mathbb{R}^{n+p}$, since Whitney's 
conditions are invariant by diffeomorphism (see \cite{Ma}), if
$F$ is Whitney regular, so is $F\circ\Phi_1$, where 
$\Phi_1(y,\tau)=(\Psi(y,\tau),\tau).$

Secondly,  if $F$ is Whitney regular, then so is $F\circ\Phi_2$, where $\Phi_2(y,\tau)=(y,\lambda(\tau))$
and $\lambda: \R^p,0\to\R^p,0.$

In fact, the condition $b^\prime$ is trivially satisfied since it does not make use of the partial derivative
relatively to the parameter.

To check, the $(a)$ condition, we compute
$\dis\frac{\partial F\circ \Phi_2}{\partial 
t_{j}}(y,\lambda(\tau))=\sum_{m=1}^p\frac{\partial\lambda_{m}}{\partial t_{j}}(\tau)
\frac{\partial F}{\partial \lambda_{m}}(y,\lambda(\tau))$,
since $F$ satisfy $(a)$ condition, we have
$$\lim_{(y,\tau)\to 0\atop (x,y)\in X-Y}\left ( \frac{\frac{\partial 
F\circ\Phi_2}{\partial t_{j}}(y,\tau)}
{\| d_{x} F\circ\Phi_2(y,\tau)\|}\right)=\lim_{(y,\tau)\to 0\atop 
(x,y)\in X-Y}\sum_{m=1}^p\frac{\partial\lambda_{m}}{\partial 
t_{j}}(\tau)
\left ( \frac{\frac{\partial F\circ\Phi_2}{\partial t_{m}}(y,\tau)}
{\| d_{x} F\circ\Phi_2(y,\tau)\|}\right)=0.$$

Now $(1)$ and $(2)$ implies that for any $\Phi\in \mathcal{G}$, 
$F\circ\Phi$ is Whitney regular, then it has no vanishing folds.

(ii) $\Rightarrow$ (i)

Firstly,  since $F$ is a $\mu$-constant deformation in a neighborhood of $0$, 
 its satisfy the  $(a)$ regularity condition (  in fact we have more, 
 $\mu$-constant implie ``good stratification'' in the sens of Thom, see \cite{LS}, 
 \cite{BS}, \cite{T}).

 Let us suppose that $b$ fails, which in turn implies  $b'$ fails, 
  since $a$ holds. 
 
Let $\Delta(z,\tau)=\frac{\sum_{i=1}^{n}x_{i}\frac{\partial F}{\partial x_{i}}(z,\tau)}{\|x\|\| grad_{x} F(z,\tau)\|}$
where $(z,\tau)\in X-Y.$

Then there exists a real analytic curve
 $\gamma:[0,\epsilon] \to X$, $\gamma(s)=(x(s),t(s))$   and $\delta_0>0$ such that:
\begin{itemize}
\item[1)] $\gamma(0)=(0,0)$
\item[2)] $f(z(s),\tau(s))=0$ for all $0\leq s\leq \epsilon,$ and
\item[3)] $\lim_{s\to 0}  \Delta(z(s),\tau(s))=l\not=0$
\end{itemize}

Let us dnote par $v$ the valuation of $\mathcal{O}_{\R,0}$ associated to $\gamma.$

We will use the following notations:\\
$\displaystyle v(z)= \inf_{1\leq i\leq n} z_i$  for $z\in \R^n,$
$\displaystyle v(\frac{\partial f}{\partial x})= \inf_{1\leq i\leq 
n}v(\frac{\partial f}{\partial x_i}).$\\

In this conditions, if we denote $v(z)=p$ and $v(\frac{\partial f}{\partial x})=q$, we can suppose
(change the order of variables if needed) that $v(z_1)=p.$

Since, $\Delta\circ \gamma(s)$ has a non zero limit when $s$ tends to $0$, we may conclude that
$v(<z,\overline{\frac{\partial f}{\partial x}}(z,\tau)>=v(z)+v(\frac{\partial f}{\partial x}(z,\tau))=p+q$

Let us now  denotes  $\gamma(s)=(p_1(s),\ldots,p_n(s), \lambda(s)),$

$\overline{\frac{\partial f}{\partial x}}\circ\gamma(s)=(q_1(s),\ldots,q_n(s) $
and
$v(<z,\overline{\frac{\partial f}{\partial 
x}}>\circ\gamma(s))=u(s).$\\
Let us now define\\
$\Phi: \R^n\times \R, 0\to \R^n\times \R, 0 $ by: $\Phi(y_1,\ldots,y_n, \tau)=(\Psi(y, \tau), \lambda(\tau))$

with $ \Psi(y, \tau)=(y_1-\frac{ p_1}{u}h,y_2+\frac{ p_2}{p_1}y_1-\frac{ p_2}{u}h,\ldots,
y_n+\frac{ p_n}{p_1}y_1-\frac{ p_n}{u}h) $

where $h=q_2y_2+q_3y_3+\ldots+q_ny_n.$

We may first check that, $\Phi\in \mathcal{G}:$

1) $\Phi$ is analytic.

We use for this the valuation along $\gamma.$

If $j\not = 1,$ then for $y_j+\frac{ p_j}{p_1}y_1-\frac{ p_j}{u}h$ , we have by hypothesis
$v(\frac{ p_j}{p_1}y_1)\geq v(p_1)+v(y_1)-v(p_1)\geq0$ and
$v(\frac{ p_j}{u}h)= v(p_j)+v(h)-v(u)\geq (p+q)-(p+q)=0.$

If $j=1 $ for $y_1-\frac{ p_1}{u}h$, we have
$v(\frac{ p_1}{u}h)=v(p_1)+v(h)-v(u)=(p+q)-(p+q)=0.$

2) $\Psi(0, \tau)=0$
 
3) The jacobian of $\Psi$ is invertible in a neighborhood of $0.$   

For this we compute the determinant of this Jacobian and show it equals $1.$

Let $\Phi_1(y,\tau)=y_1-\frac{ p_1}{u}h$ and
$\Phi_j(y,\tau)=y_j+\frac{ p_j}{p_1}y_1-\frac{ p_j}{u}h$ for $2\leq j\leq n.$

Then $\frac{\partial \Phi_1}{\partial y_1}=1$ and $\forall j\geq 2,$

$\frac{\partial \Phi_1}{\partial y_j}=
-\frac{ p_1}{u}q_j.$

 $\forall i,j\geq 2, $ $i\not= j,$   $\frac{\partial \Phi_j}{\partial y_i}=
-\frac{ p_j}{u}q_i$

 $\forall i\geq 2, $   $\frac{\partial \Phi_i}{\partial y_i}=1-\frac{ p_i}{u}q_i$
 and   $\frac{\partial \Phi_i}{\partial y_1}=\frac{ p_i}{p_1}.$

\begin{equation}
 \det\left(\frac{\partial \Psi}{\partial y}(y, \tau)\right)=\left\vert\begin{array}{cccccc}
1 & -\frac{ p_1}{u}q_2 & \ldots & -\frac{ p_1}{u}q_i  & \ldots & -\frac{ p_1}{u}q_n\\
\frac{ p_2}{p_1} & 1-\frac{ p_2}{u}q_2 & & \vdots &  &\vdots\\
 \vdots& & \ddots & &  &\vdots \\
 \vdots & & & 1-\frac{ p_i}{u}q_i &  & \vdots\\
 \vdots & & &  &  &  \vdots\\
 \vdots & &  & & \ddots &  \vdots\\
\frac{ p_n}{p_1} & -\frac{ p_n}{u}q_2& \ldots & -\frac{ p_n}{u}q_i &\ldots &1-\frac{ p_n}{u}q_n
 \end{array}\right\vert
 \end{equation}

If we denotes by $C_j$ the jth row, and applied the transformation $C_j=C_j+\frac{ p_iq_i}{u}C_1$ for $j=1\cdots
m$

we see that 
\begin{equation}
 \det\left(\frac{\partial \Psi}{\partial y}(y, \tau)\right)=\left\vert\begin{array}{cccccc}
1 & 0 & \ldots & \ldots & \ldots & 0\\
\frac{ p_2}{p_1} & 1 & & \vdots &  &\vdots\\
 \vdots&0 & \ddots &0&  &\vdots \\
 \vdots & & & 1&  & \vdots\\
 \vdots & & &  &  &  \vdots\\
 \vdots & &  & & \ddots &  0\\
\frac{ p_n}{p_1} &0& \ldots & \ldots &0 &1
 \end{array}\right \vert =1
 \end{equation}

We can now conclude that $\Phi\in \mathcal{G}.$

Moreover, by construction we have
$\Phi(p_1(\tau),0\ldots,0,\tau)=\gamma(\tau)$

The computation gives us

$\frac{\partial F\circ\Phi}{\partial y_{i}}(p_1(\tau),0\ldots,0,\tau)=
\sum_{j=1}^{n}\frac{\partial F}{\partial x_{i}}(\gamma(\tau))
\displaystyle \frac{\partial\Phi_j}{\partial y_{i}}(p_1(\tau),0\ldots,0,\tau)$

$\displaystyle =\sum_{j=1}^{n}q_j(\tau)\frac{\partial\Phi_j}{\partial y_{i}}(p_1(\tau),0\ldots,0,\tau)$

$\displaystyle = -\frac{ q_1(\tau)q_i(\tau) p_1(\tau)}{u}+ q_i(\tau)\left(1- \frac{ p_i(\tau)q_i(\tau)}{u}\right)
+\sum_{j\not=1\atop j\not= i}-\frac{ q_j(\tau)q_i(\tau) p_j(\tau)}{u} $

$\displaystyle =-\frac{ q_i(\tau)}{u}\left(p_1(\tau)q_1(\tau) +
p_i(\tau)q_i(\tau)+\sum_{j\not=1\atop j\not= i}p_j(\tau)q_j(\tau)\right)
+q_i(\tau)$

$=-\frac{ q_i(\tau)}{u}.u +q_i(\tau)=0.$

If $i=1,$  $\displaystyle I=q_1(\tau)+\sum_{j=2}^{n}q_j(\tau)\frac{ p_i(\tau)}{ p_1(\tau)} =\frac{u}{ p_1}$

Then, we obtain that
$\frac{\partial F\circ\Phi}{\partial 
y_{i}}(p_1(\tau),0\ldots,0,\tau)=\lambda({p_1(\tau)},0,\ldots,0)$
with $\lambda=\frac{u}{\vert p_1\vert^2}$
this means that $ F\circ\Phi$ has a vanishing fold.

\begin{rem}
In this proof we can replace $\mathcal{G}$ by the set

$\mathcal{G}_l=\{\Phi=(\Psi,\lambda):
\R^n\times\R,0 \to \R^n\times\R,0\; \text{such that} \; \Psi(.,\tau)\in Gl(\R^n)\} $
\end{rem}

A consequence of  this theorem is, an example of $\mu$-constant deformation with a vanishing fold gives an example of
non Whitney regular $\mu$-constant deformation.

\subsection{Example}

\underline { The Brian\c{c}on and Speder  example has vanishing 
folds  } (see \cite{BS} \cite{Tr})

From the theorem,  Whitney faults is detected by vanishing folds.
So to find a vanishing fold, it suffises to find an arc along which the Whitney regularity fails.

Let  $F(x,y,z,t)= z^5+ty^6z+y^7x+x^{15}$, then $F$ is quasihomogenous $\mu-$constant family of type $(3,2,1;15).$
Then as we saw,  the canonical stratification is $(a)$ regular.

$\left\{\begin{array}{lll} \frac{\partial F}{\partial x}=y^7+15x^{14} \\
 \frac{\partial F}{\partial y}=6ty^5+7y^{6} \\  \frac{\partial F}{\partial z}=5z^4+ty^6 \end{array}\right.$

We shall construct an explicit analytic path 
$\gamma(s)=(x(s),y(s),z(s),t(s))$ contained in $V$ along which the $b'$ condition 
fails,   that is   $\Delta(x,y,z,t)=\left 
(\frac{\sum_{i=1}^{n}x_{i}\frac{\partial F}{\partial 
x_{i}}(x,y,z,t)}{\|x\|\| grad_{x} F(x,y,z,t)\|}\right)$ do not tends
to $0$ when $(x,y,z,t)$  tends to $0$ along $\gamma(s).$ 

Let's take it of the following form
$$\left\{\begin{array}{lll} x(s)=\lambda s^5 \\
y(s)=\alpha s^5\\z(s)=s^8\\t(s)=-\frac{5}{\alpha^6}s^2 \end{array}\right.$$
with  $\alpha\not =0.$\\
We must have $ F(\gamma(s))=(1-\frac{5}{\alpha^6}\alpha^6+\lambda \alpha^7+
 \lambda^{15}s^{35})s^{40}\equiv 0 $ i.e.\\
$G(\lambda,s)=-4+\lambda \alpha^7+ \lambda^{15}s^{35}$

Since $\frac{\partial G}{\partial \lambda}(\lambda,0)=\alpha^7\not=0$,  by the implicit function theorem
$\lambda $ is a smooth function of $s.$

Then we have along $\gamma(s)$  nears $s=0:$

$\left\{\begin{array}{lll} \frac{\partial F}{\partial x}=y^7+15x^{14}=\alpha^7s^{35} + 15\lambda^{15}s^{70}
\sim  \alpha^7s^{35}\\\\

\frac{\partial F}{\partial y}=6ty^5+7y^{6} 
=\left(\frac{-30}{\alpha}+7\alpha^6\lambda\right)s^{35}\\\\
 
 \frac{\partial F}{\partial z}=5z^4+ty^6=5s^{32}- \frac{5}{\alpha^6}\alpha^6s^{32}\equiv 0\end{array}\right.$

The limit of orthogonal secant vectors $\frac{(x,y,z)}{\|(x,y,z)\|}$ is  $(1:\alpha:0)$
and the limit of normal vectors $\left ( \frac{ grad_{x} F(x,y,z,t)}{\| grad_{x} F(x,y,z,t)\|}\right)$ is
$(\alpha^7:\frac{-30}{\alpha}+ 7\alpha^6:0).$

Then $\Delta(\gamma(s))$ tends to $0$ if and only if 
$8\alpha^7-30=0$.  We can clearly choose $\alpha\not=0$ such that 
$8\alpha^7-30\not=0$, this means that Whitney
condition fails along this curve.

Now the construction of the family analytique automorphisms in the 
proof gives \`a ``control function'' $\rho$ such that the family  
$\{f_{t}(x,y,z)=z^5+ty^6z+y^7x+x^{15}\}$, has a $\rho$-vanishing fold 
i.e. its $\rho$-Milnor radius is not uniform.

The control function is obtained by composing the standard disytnace 
function $|x|^2+|y|^2+|z|^2$ by the family of automorphisms obtained 
this way.

It may be interesting to establish an analogue of theorem 
\ref{mainthm} for the condition $C$ 
 (see \cite{B}) and the relation with  uniform radii.

\section{tame mappings}
In this section we establish a version of Theorem \ref{mainthm} for 
family of germs of tame mappings (i.e. in the $o$-minimal category ).
We will assume the reader familiar with the basic facts about $o$-minimal structure. The standard references are L. Van den Driess \cite{D}, L. Van den Driess and C. Miller \cite{DM}
and M. Coste \cite{C}.

Let us first recall the definition of an $o$-minimal structure extending the field $(\R,+,.)$.

\begin{defn} Let $\cS=\cup_{n\in\N}\cS_{n}$, where for each $n\in \N$, $\cS_{n}$ is a family of subsets of $\R^{n}$.

We say that the collection $\cS$ is an \textit{$o$-minimal structure} on  $(\R,+,.)$ if:
\begin{itemize}
\item[1)] each $\cS_{n}$ is a boolean algebra
\item[2)] if $A\in\cS_{n}$ and $B\in\cS_{m}$, then $A\times B\in \cS_{n+m}$
\item[3)] let $A\in\cS_{n+m}$ and $\pi: \R^{n+m}\ra\R^n$ be the projection on the first $n$ coordinates, then $\pi(A)\in\cS_{n}$.
\item[4)] all algebraic subsets of $\R^{n}$ are in $\cS_{n}$
\item[5)] the elements of $\cS_{1}$ are the finite unions of points and
intervals.
\end{itemize}

A subset $A$ of $\R^{n}$ which belongs to  $\cS_{n}$ is called
a \textit{definable} set in $\mathcal S$.
A map $f:A\ra\R^m$ is \textit{definable} in $\mathcal S$
if its graph is a definable subset of $\R^n \times\R^m$ in $\mathcal S$,
if in addition, it is $C^{k}$ for some $k\in \N$, we call it
a $C^{k}$ definable map in $\mathcal S$.

We call a tame map map definable in some $o$-minimal structure.
\end{defn}

Let $\cS$ be an $o$-minimal structure on $(\R,+,.)$.
We recall  from \cite{DM} the following notation:

\vspace{2mm}

\noindent {\bf Notation:}
Let $p$ be a natural number.
Let $\Phi^p_{\cS}$ denote the set of all odd,
strictly increasing bijections $\phi:\R\ra\R$ $C^p$ definable in $\cS$
and $p$-flat at $0$ (that is $\phi^{(l)}(0)=0$ for $l=0,\ldots, p$).
\vspace{2mm}

\noindent
We quote also from this paper the following lemma (Lemma C.7. page 523):
\begin{lem}\label{Lem0}
Let $f:A\times \R^*\raa \R$ be a definable function in $\cS$,
$A\subset \R^{n}$.
Then, for any $p\in\mathbb{N}$,
there exists $\phi\in \Phi^p_{\cS}$ such that
$\lim_{t\to 0}\phi(t)f(x,t)=0$ for each $x\in A$.
\end{lem}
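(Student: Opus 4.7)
The plan is to build $\phi$ from a definable majorant of $|f|$ constructed by a diagonal trick, and then upgrade it to lie in $\Phi^p_\cS$ via standard $o$-minimal regularization.

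Since the conclusion is pointwise in $x$ (not uniform), I would exploit a diagonal restriction to tame $f$ as $t \to 0$. On a punctured neighborhood of $0$ I would define the definable function
\[
g(t) \;=\; \sup\bigl\{|f(x, s)| : x \in A,\; |x| \leq 1/|t|,\; 0 < |s| \leq |t|\bigr\}.
\]
The constraint $|x| \leq 1/|t|$ is the key point: for each fixed $x \in A$, once $|t| < 1/|x|$ the slice $x$ is inside the sup, so $|f(x, t)| \leq g(t)$ for all sufficiently small $|t|$ depending on $x$. Hence any $\phi$ with $|\phi(t)| g(t) \to 0$ as $t \to 0$ forces $\phi(t) f(x, t) \to 0$ for each $x \in A$. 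A natural candidate is
\[
\phi_0(t) \;=\; \sgn(t)\,\frac{|t|^{p+1}}{1 + g(|t|)},
\]
which is odd, definable in $\cS$, and satisfies $|\phi_0(t)|\, g(t) \leq |t|^{p+1} \to 0$.

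The remaining work is to upgrade $\phi_0$ to an element of $\Phi^p_\cS$. Using the $C^p$-cell decomposition theorem for $\cS$, one can replace $\phi_0$ on a neighborhood of $0$ by a definable $C^p$ function $\phi_1$ of the same sign with $|\phi_1|\leq|\phi_0|$, make $\phi_1$ strictly increasing by comparing with a small definable strictly increasing $C^p$ profile on $(0,\epsilon)$, and continue it by a strictly increasing definable $C^p$ tail to a bijection $\R \to \R$. Each step preserves the majorization by $|t|^{p+1}$, so $p$-flatness at $0$ is automatic: any $C^p$ function bounded by $C|t|^{p+1}$ has vanishing derivatives up to order $p$ at $0$, since otherwise its degree-$p$ Taylor polynomial at $0$ would fail to be $o(t^p)$.

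The main obstacle I anticipate is verifying that $g$ is actually finite on a punctured neighborhood of $0$: an $o$-minimal function on a bounded definable set need not be bounded (think of $1/|x-a|$). To handle this one would first apply $o$-minimal cell decomposition to $A \times \R^*$ and to the graph of $f$, splitting $A$ into finitely many definable pieces on each of which the relevant sup is finite as a definable function of $t$ alone, and then combine the resulting $\phi$'s --- for instance by a pointwise minimum in absolute value within $\Phi^p_\cS$. Once this preparation is done, the diagonal construction and the regularization go through as above.
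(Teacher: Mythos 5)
The paper itself gives no proof of this statement: it is quoted verbatim from van den Dries--Miller \cite{DM} (Lemma C.7), so your argument has to stand on its own, and at its heart it does not, because the object it is built on is generally not defined. The supremum
$g(t)=\sup\{|f(x,s)| : x\in A,\ |x|\le 1/|t|,\ 0<|s|\le |t|\}$
is typically $+\infty$ for \emph{every} $t\neq 0$. Already for the semialgebraic family $f(x,s)=x/s$ on $A=\R$ (for which the lemma is trivial: $\phi(t)=t|t|^{p+1}$ works), any fixed $x\neq 0$ with $|x|\le 1/|t|$ contributes $\sup_{0<|s|\le|t|}|x|/|s|=+\infty$, because your supremum lets $s$ run all the way down to $0$. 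Then $\phi_0$ is identically $0$ near $0$, and no strictly increasing $\phi_1$ with $|\phi_1|\le|\phi_0|$ can exist, so the construction collapses before the regularization step. The repair you anticipate, splitting $A$ by cell decomposition so that the sup becomes finite on each piece, cannot work either: in the same example every definable piece containing a single nonzero point still gives an infinite sup. Even if one replaces the range $0<|s|\le|t|$ by $s=t$ (the more natural diagonal sup), the family $f(x,t)=1/|x-t|$ (set to $0$ on the diagonal) gives an infinite sup on every piece of $A=\R$ adjacent to $0$, and any finite definable partition of $\R$ must contain such a piece.

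The underlying point is that the conclusion is only pointwise in $x$, whereas any supremum of this kind demands a majorant uniform over regions where $f$ is genuinely unbounded (in $x$ near finite points of $\overline{A}$, or in $s$ near $0$); producing a definable $g$ that eventually dominates every fiber $t\mapsto f(x,t)$ is essentially the whole content of the lemma and is exactly where the cited proof of van den Dries and Miller does real work, which your construction does not replace. The remaining steps of your proposal (oddness, definable $C^p$ regularization, and the observation that a $C^p$ function dominated by $C|t|^{p+1}$ is $p$-flat at $0$) are fine, but they are the easy part; as written there is a genuine gap at the main step.
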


A $C^k$ version of this lemma is given in the following:

\begin{lem}\label{Lem1}
Let $f:U\times \R^*\raa \R$ be a  $C^k$ definable
function in $\cS$, and let $U$ be an open subset of $ \R^{n}$.
Then, for any $p\in\mathbb{N}$,
there exists $\phi\in \Phi^p_{\cS}$ such that the function
$$
g(x,t)=\begin{cases}
\phi(t)f(x,t)&\, \text{if $t\not=0$},\\
0&\, \text{if $t=0$}.
\end{cases}
$$
is a $C^k$ definable function.
\end{lem}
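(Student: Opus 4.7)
The plan is to convert the $C^k$-extension problem into finitely many pointwise vanishing statements handled by Lemma~\ref{Lem0}, and to control the derivatives of $\phi$ by taking a cube.

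Observe first that $g$ is automatically $C^k$ definable on $U\times\R^*$, so the real content is that all mixed partial derivatives of $g$ of order $\leq k$ extend continuously by zero across $U\times\{0\}$. By Leibniz's rule, for $t\neq 0$ and $|\alpha|+\beta\leq k$,
$$
\partial_x^\alpha\partial_t^\beta g(x,t)=\sum_{j=0}^{\beta}\binom{\beta}{j}\phi^{(\beta-j)}(t)\,\partial_x^\alpha\partial_t^j f(x,t).
$$
So I want $\phi\in\Phi^p_\cS$ such that $\phi^{(l)}(t)\,\partial_x^\alpha\partial_t^j f(x,t)\to 0$ as $t\to 0$, locally uniformly in $x$, for every $(\alpha,j,l)$ with $|\alpha|+j\leq k$ and $l\leq k$.

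Next I would apply Lemma~\ref{Lem0}, with parameter $q:=\max(p,k)$, to the definable function
$$F(x,t):=\max_{|\alpha|+j\leq k}|\partial_x^\alpha\partial_t^j f(x,t)|,$$
producing a single $\psi\in\Phi^q_\cS$ with $\psi(t)F(x,t)\to 0$ pointwise. Then I would simply set $\phi:=\psi^3$. This $\phi$ is odd, strictly increasing, $C^q$-definable in $\cS$, and $(3q+2)$-flat at $0$, hence an element of $\Phi^p_\cS$. Using $|\psi^{(i)}(t)|=O(|t|^{q+1-i})$ for $0\leq i\leq q$ and expanding $(\psi^3)^{(l)}$ by the generalized Leibniz rule term-by-term, one obtains for $l\leq k\leq q$
$$
|\phi^{(l)}(t)|\leq C_l\,|\psi(t)|\,|t|^{2q+2-l}\leq C_l'\,|\psi(t)|
$$
on a neighborhood of $0$. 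Plugged back into the Leibniz expansion of $\partial_x^\alpha\partial_t^\beta g$, every term is dominated in absolute value by a constant multiple of $|\psi(t)\,\partial_x^\alpha\partial_t^j f(x,t)|$, and hence tends to $0$.

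The hard part will be upgrading this pointwise convergence to the local uniform convergence on $U$ actually required for continuity of $\partial_x^\alpha\partial_t^\beta g$ at the points $(x_0,0)$. I plan to handle it by applying Lemma~\ref{Lem0} not to $F$ itself but to the definable one-variable envelope
$$
\widetilde F(t):=\sup\bigl\{F(x,t):x\in U,\ \operatorname{dist}(x,\partial U)\geq|t|,\ |x|\leq 1/|t|\bigr\},
$$
which depends only on $t$. Any compact $K\subset U$ lies, for $|t|$ small enough, in the set defining $\widetilde F(t)$, so $\psi(t)\widetilde F(t)\to 0$ forces $\psi(t)F(x,t)\to 0$ uniformly on $K$; this local uniformity propagates through the Leibniz estimates to give the required continuity, and $g$ is then a $C^k$ definable function on $U\times\R$, definability being clear from the construction.
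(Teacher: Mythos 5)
Your overall architecture is the same as the paper's: expand $\partial_x^\alpha\partial_t^\beta g$ by Leibniz, apply Lemma~\ref{Lem0} to the finite collection of partials $D^{\alpha}_{(x,t)}f$, $|\alpha|\le k$, and take an odd power of the resulting flat function. Your envelope $\widetilde F$ is a genuine refinement: the paper's argument records only the pointwise conclusion of Lemma~\ref{Lem0} (for each fixed $x$), which by itself does not yield continuity of the extended partials at points of $U\times\{0\}$ (compare $tx/(x^2+t^2)$, which tends to $0$ for each fixed $x$ but not along $x=t$); applying Lemma~\ref{Lem0} to the one-variable definable function $\widetilde F(t)$ gives the locally uniform convergence on compact subsets of $U$ that the continuity argument, together with the usual mean-value step producing the $t$-derivatives on $U\times\{0\}$, actually requires. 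So on that point you do better than the written proof.

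The genuine gap is in your derivative estimates for $\phi=\psi^3$. First, $\psi\in\Phi^q_{\cS}$ is only $C^q$ and $q$-flat, so Taylor gives $\psi^{(i)}(t)=o(|t|^{q-i})$, not $O(|t|^{q+1-i})$ (and $\psi^3$ cannot be ``$(3q+2)$-flat'': it is only $C^q$). More seriously, the inequality $|\phi^{(l)}(t)|\le C_l\,|\psi(t)|\,|t|^{2q+2-l}$ cannot be obtained term by term from such bounds: this lemma is stated for a general $o$-minimal structure, where $|\psi(t)|$ may be smaller than every power of $|t|$, so a bound of size $|t|^{3(q+1)-l}$ does not produce a factor $|\psi(t)|$; and for $l\ge 3$ the expansion of $(\psi^3)^{(l)}$ contains terms such as $(\psi')^3$ with no undifferentiated factor $\psi$ at all, while $|\psi'|\le C|\psi|$ is false for every nonzero flat function. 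Since your choice of $\psi$ only controls products in which $\psi$ itself appears, the required convergence $\phi^{(l)}(t)\,\partial_x^\alpha\partial_t^j f(x,t)\to 0$ is not established for $k\ge 3$; whether $\psi^3$ ultimately works would need a separate (nontrivial) comparison lemma for definable flat functions. The repair is exactly the paper's choice of exponent: take $\psi\in\Phi^{\max(p,k)}_{\cS}$ and set $\phi:=\psi^{2k+1}$ (an odd exponent, so oddness is preserved); then each term of $\phi^{(l)}$, $l\le k$, is a product of $2k+1$ factors $\psi^{(a_i)}$ with $\sum a_i\le k$, hence contains at least $k+1$ bare factors $\psi$, and boundedness of $\psi',\dots,\psi^{(k)}$ near $0$ immediately gives $|\phi^{(l)}(t)|\le C\,|\psi(t)|$, after which your envelope argument finishes the proof.
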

\begin{proof}
Let $h:U\times \R^*\ra\R$ denote any function from the collection of partial
derivatives:
$$\{D_{(x,t)}^{\alpha}f\ ;\ \,
\alpha=(\alpha_{1},\ldots,\alpha_{n},\alpha_{n+1})\in \N^{n+1}\,\text{and}\, \vert \alpha\vert\le k\}$$

By Lemma \ref{Lem0}, there exists  $\theta\in \Phi^p_{\cS}$ such that  $\lim_{t\to 0}\theta(t)h(x,t)=0$ for each $x\in U$.
Then $\phi:= \theta^{2k+1}$ satisfies the needs.
\end{proof}

\begin{defn} A structure $\cS$ on the field $(\R,+,.)$ is \textit{polynomially
bounded} if for any function $f:\R\raa\R$ definable in $\cS$, there exists
$N\in \N$ such that
$$\vert f(t)\vert\le t^N$$
for all sufficiently large $t$.
\end{defn}
\begin{lem}\label{Lem2}
If $\cS$ is polynomially bounded, then for any $\phi\in \Phi^p_{\cS}$,
there exists $d\in \N$ and real number $\epsilon>0$ such that:
$$\vert \phi(t)\vert \geq t^d$$
for any $t\in (-\epsilon, \epsilon)$.
\end{lem}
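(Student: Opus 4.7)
The plan is to convert a lower bound on $\phi$ near $0$ into an upper bound on a related definable function at infinity, where polynomial boundedness directly applies. Since $\phi$ is odd, it suffices to establish the bound on some right half-neighborhood $(0,\epsilon)$; the inequality for negative $t$ then follows from $\phi(-s)=-\phi(s)$.

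The key step is to define the auxiliary function
\[
g(t)=\begin{cases} 1/\phi(1/t) & \text{if } t>0,\\ 0 & \text{if } t\le 0.\end{cases}
\]
Because $\phi\in\Phi^p_{\cS}$ is a strictly increasing bijection with $\phi(0)=0$, we have $\phi(1/t)>0$ for $t>0$, so $g$ is well-defined, and its graph is obtained from that of $\phi$ by definable operations (inversion of the argument and of the value), hence $g$ is definable in $\cS$. Moreover, as $t\to+\infty$ the value $1/t$ tends to $0^+$, so $\phi(1/t)\to 0^+$ and consequently $g(t)\to+\infty$.

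I would then apply the polynomial boundedness hypothesis directly to $g$: there exist $N\in\N$ and $T_{0}>0$ such that $|g(t)|\le t^{N}$ for all $t\ge T_{0}$. Rewriting this as $1/\phi(1/t)\le t^{N}$ and setting $s=1/t$, we obtain $\phi(s)\ge s^{N}$ for every $s\in(0,1/T_{0}]$. Using the oddness of $\phi$, for $s\in[-1/T_{0},0)$ we have $|\phi(s)|=\phi(-s)\ge(-s)^{N}=|s|^{N}$. Taking $d=N$ and $\epsilon=1/T_{0}$ yields the claimed inequality $|\phi(t)|\ge|t|^{d}$ on $(-\epsilon,\epsilon)$.

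The proof is essentially a one-line trick once one sees the right auxiliary function; the only point requiring a small verification is that $g$ belongs to the definable category of $\cS$ (so that polynomial boundedness applies to it), which follows from the closure of definable functions under composition with the definable map $t\mapsto 1/t$ and under reciprocation on the locus where $\phi$ does not vanish. There is no substantial obstacle beyond keeping track of signs and of the direction of the inequality after inversion.
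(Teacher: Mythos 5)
Your proof is correct and follows essentially the same route as the paper: you pass to the auxiliary function $t\mapsto 1/\phi(1/t)$, apply polynomial boundedness at infinity, and invert back to get the lower bound near $0$ (with $d=N$ and $\epsilon=1/T_0$), exactly as in the paper's argument with $\theta(s)=1/\phi(1/s)$. Your explicit treatment of negative $t$ via the oddness of $\phi$ is a small added precision, not a different method.
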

\begin{proof}
We take $\displaystyle\theta(s):=\frac{1}{\phi(\frac{1}{s})}$
where $s=\frac{1}{t}$.
Since $\theta$ is definable in a \textit{polynomially bounded} structure,
there exists $d\in \N$ and $M\in \N$ such that $\vert \theta(s)\vert\leq s^d$
for $\vert s\vert > M$.
Therefore $\vert \phi(t)\vert\geq t^d$ for any $t\in (-\epsilon, \epsilon)$ with $\epsilon=\frac{1}{M}$.
\end{proof}
\begin{defn}
 A tame map is a map definable in some polynomially bounded $o$-minimal structure.
\end{defn}

Using the lemmas above, we can show the following theorem which is a 
tame version of the main result.

\begin{thm}\label{Th3}

    Given a  tame family of isolated singularities germs \\
$F:(\mathbb{R}^p\times \mathbb{R}^n, 
\mathbb{R}^p\times\{0\})\to (\mathbb{R}^k,0).$ We suppose the family 
$F$ is a $\mu$-constant deformation of $f_{0}=f$. Then the  following conditions are equivalent
\begin{itemize}
\item[(i)] $F$ is whitney regular.
\item[(ii)] $\forall \Phi\in \mathcal{G}$, $F\circ \Phi$ has no vanishing fold.
\end{itemize}
\end{thm}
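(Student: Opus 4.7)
The plan is to translate the proof of Theorem~\ref{mainthm} into the $o$-minimal setting, replacing the real-analytic curve-selection lemma and the valuation along a real-analytic arc by their tame counterparts furnished by Lemmas~\ref{Lem0}, \ref{Lem1}, and \ref{Lem2}. The restriction to polynomially bounded structures is essential: it guarantees that each nonzero definable germ $h\circ\gamma(s)$ admits a well-defined order of vanishing in $s$, which plays the role of the valuation $v$ used in Theorem~\ref{mainthm}. The direction (i)$\Rightarrow$(ii) transfers unchanged: the two-step factorization $\Phi=\Phi_1\circ\Phi_2$ with $\Phi_1(y,\tau)=(\Psi(y,\tau),\tau)$ a tame diffeomorphism and $\Phi_2(y,\tau)=(y,\lambda(\tau))$ a tame base-change uses only invariance of conditions $(a)$ and $(b')$ under tame diffeomorphisms and a formal chain-rule argument, neither of which relies on analyticity.

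For (ii)$\Rightarrow$(i), I would again use $\mu$-constancy to secure $(a)$ and assume that $(b')$ fails. The $o$-minimal curve-selection lemma produces a definable continuous arc $\gamma(s)=(z(s),\tau(s))\subset X$ along which $\Delta(\gamma(s))\to l\neq 0$. Using polynomial boundedness together with Lemma~\ref{Lem2}, I would attach to each coordinate $p_j(s)=z_j\circ\gamma(s)$, to each $q_j(s)=(\partial f/\partial x_j)\circ\gamma(s)$, and to $u(s)=\langle z,\overline{\partial f/\partial x}\rangle\circ\gamma(s)$ an integer (or, at worst, rational) order of vanishing in $s$, and verify the additivity $p+q=\mathrm{ord}(u)$ exactly as in the analytic case. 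Once the orders are in place, the automorphism
$$\Psi_1=y_1-\tfrac{p_1}{u}h,\qquad \Psi_j=y_j+\tfrac{p_j}{p_1}y_1-\tfrac{p_j}{u}h\ (j\ge 2),\qquad h=\sum_{j\ge 2}q_j y_j,$$
and the Jacobian computation yielding determinant $1$ go through without change, as does the verification that $(p_1(\tau),0,\ldots,0,\tau)$ is a $\rho$-vanishing fold of $F\circ\Phi$ for $\rho=\sum|y_i|^2$.

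The main technical obstacle is to ensure that $\Phi$ is a \emph{tame} $C^k$ germ at $(0,0)$. The ratios $p_j/p_1$ and $p_j/u$ are \emph{a priori} definable only for $\tau\ne 0$, and their boundedness there does not automatically give smoothness at the origin. Here I would apply Lemma~\ref{Lem1} to multiply the parameter $\tau$ by a tame flat bijection $\phi\in\Phi^p_{\cS}$, equivalently reparametrizing $\gamma$ by $\phi^{-1}$, so that after this tame reparametrization each ratio becomes a $C^k$ definable germ extending through $0$; Lemma~\ref{Lem2} then ensures that the reparametrization does not destroy the additivity of orders on which the Jacobian computation depends. The resulting $\Phi$ lies in the tame analogue of $\mathcal{G}$, and its composition with $F$ exhibits the required $\rho$-vanishing fold, contradicting (ii). For targets of higher dimension $k>1$, the only adjustment is to replace $\partial f/\partial x$ and the denominator $u$ by the Jacobian minors appearing in the second definition of $\rho$-Milnor radius; the structural argument is identical.
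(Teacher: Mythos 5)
Your proposal is correct and follows essentially the route the paper intends: the paper gives no written proof of Theorem \ref{Th3} beyond the remark that Lemmas \ref{Lem0}--\ref{Lem2} allow the argument of Theorem \ref{mainthm} to carry over, and your sketch is precisely that transfer (definable curve selection, orders of vanishing along the arc, the same automorphism $\Phi$ with unit Jacobian, and a flat definable reparametrization absorbed into the base change $\lambda$ to make the coefficient ratios $C^k$ definable at $0$, which is exactly where polynomial boundedness and the quoted lemmas enter). One minor imprecision: in a polynomially bounded structure the orders along the arc lie in the field of exponents and need not be integers or rationals, but since only their additivity and comparison are used this does not affect the argument.
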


\end{document}